\algrenewcommand\algorithmicrequire{\textbf{Precondition:}}
\algrenewcommand\algorithmicensure{\textbf{Postcondition:}}
\DeclareMathSymbol{\shortminus}{\mathbin}{AMSa}{"39}
\g@addto@macro\normalsize{%
  \setlength\abovedisplayskip{10pt}
  \setlength\belowdisplayskip{10pt}
  \setlength\abovedisplayshortskip{5pt}
  \setlength\belowdisplayshortskip{8pt}
}
\newtheoremstyle{normal}
{5pt}
{5pt}
{\normalfont}
{}
{\bfseries}
{}
{0.4em}
{\bfseries{\thmname{#1}\thmnumber{ #2}.\thmnote{ \hspace{0.5em}(#3)\newline}}}
\newtheoremstyle{kursiv}
{5pt}
{5pt}
{\itshape}
{}
{\bfseries}
{}
{0.4em}
{\bfseries{\thmname{#1}\thmnumber{ #2}.\thmnote{ \hspace{0.5em}(#3)\newline}}}
\theoremstyle{kursiv}
\theoremstyle{normal}
\newtheorem{thm}{Theorem}
\newtheorem{rmk}[thm]{Remark}
\newtheorem{cor}[thm]{Corollary}
\newcommand{\columnvec}[2]{\genfrac{[}{]}{0pt}{}{\,#1\,}{#2}}
\renewcommand{\epsilon}{\varepsilon}
\renewcommand{\theta}{\vartheta}
\newcommand{\Bigsum}[2]{\ensuremath{\mathop{\textstyle\sum}_{#1}^{#2}}}
\newcommand{\e}{\operatorname{e}\nolimits}
\definecolor{grey}{gray}{.3}
	\tikzset{commutative diagrams/.cd, 
		mysymbol/.style = {start anchor=center, end anchor = center, draw = none}}
	\newcommand{\commutes}[2][\circ]{\arrow[mysymbol]{#2}[description]{#1}}
\newcommand{\HDLCS}{\operatorname{\mathsf{HD-LCS}}\nolimits}
\newcommand{\LS}{\operatorname{\mathsf{LS}}\nolimits}
\newcommand{\PLS}{\operatorname{\mathsf{PLS}}\nolimits}
\newcommand{\LN}{\operatorname{\mathsf{LN}}\nolimits}
\newcommand{\PLN}{\operatorname{\mathsf{PLN}}\nolimits}
\newcommand{\LSw}{\operatorname{\mathsf{LS}}_{\operatorname{\mathsf{w}}}\nolimits}
\newcommand{\PLSw}{\operatorname{\mathsf{PLS}}_{\operatorname{\mathsf{w}}}\nolimits}
\begin{document}
\allowdisplaybreaks
$ $
\vspace{-40pt}

\title{The category of Silva spaces is not integral}

\author{Marianne Lawson\hspace{0.5pt}\MakeLowercase{$^{\text{1}}$} and\hspace{1.5pt} Sven-Ake\ Wegner\hspace{0.5pt}\MakeLowercase{$^{\text{2}}$}}

\renewcommand{\thefootnote}{}
\hspace{-1000pt}\footnote{\hspace{5.5pt}2020 \emph{Mathematics Subject Classification}: Primary 18E05, 46A13; Secondary 46M10, 46A45, 18G80.\vspace{1.6pt}}


\hspace{-1000pt}\footnote{\hspace{5.5pt}\emph{Key words and phrases}: LS-space, PLS-space, integral category, injective and projective objects. \vspace{1.6pt}}

\hspace{-1000pt}\footnote{\hspace{0pt}$^{1}$\,Teesside University, School of Computing, Engineering \&{} Digital Technologies, TS1\;3BX Middlesbrough, UK.\vspace{1.6pt}}

\hspace{-1000pt}\footnote{\hspace{0pt}$^{2}$\,Corresponding author: Universit\"at Hamburg, Fachbereich Mathematik, Bundesstra\ss{}e 55, 20146 Hamburg, Germany,\linebreak\phantom{x}\hspace{1.2pt}phone: +49\,(0)\,40\:42838\:76\:57, e-mail: sven.wegner@uni-hamburg.de.}

\begin{abstract}
We establish that the category of Silva spaces, aka LS-spaces, formed by countable inductive limits of Banach spaces with compact linking maps as objects and linear and continuous maps as morphisms, is not an integral category. The result carries over to the category of PLS-spaces, i.e., countable projective limits of LS-spaces---which contains prominent spaces of analysis such as the space of distributions and the space of real analytic functions. As a consequence, we obtain that both categories neither have enough projective nor enough injective objects. All results hold true when `compact' is replaced by `weakly compact' or `nuclear'. This leads to the categories of PLS-, PLS$_{\text{w}}$- and PLN-spaces, which are examples of `inflation exact categories with admissible cokernels' as recently introduced by Henrard, Kvamme, van Roosmalen and the second-named author.
\end{abstract}


\maketitle

\vspace{-15pt}

\section{Introduction}\label{SEC-1}

Integral categories were introduced by Rump \cite{Rump01} in 2001 and have since caught attention on the one hand in representation theory, see, e.g., Br\"ustle, Hassoun, Tattar \cite{BHT21}, or Nakaoka \cite{Nakaoka}, and on the other hand in functional analysis, see Hassoun, Shah, Wegner \cite{HSW}. By definition, a pre-abelian category $\mathcal{A}$ is \emph{right integral}, if monomorphisms  pushout to monomorphism, meaning whenever $f\colon X\rightarrow Y$ is monic and $g\colon X\rightarrow Z$ is arbitrary, then in the diagram below
\begin{equation*}
\begin{tikzcd}
X\arrow{r}{f}\arrow{d}[swap]{g} \commutes[\text{PO}]{dr}& Y \arrow{d}{j}\\ Z \arrow{r}[swap]{h} & P
\end{tikzcd}
\end{equation*}
the morphism $h$ is again monic. \emph{Left integral} categories are defined dually.

\medskip

The aim of this paper is to establish that the categories $\LS$, $\PLS$, $\LSw$, $\PLSw$, $\LN$ and $\PLN$, whose definitions will be given in due course, are all neither right nor left integral, and to outline consequences of this. The latter categories have proved to be very important in functional analysis as they arise naturally when $\operatorname{Ext}^1$-techniques are used to solve linear partial differential equations, see, in particular, Wengenroth \cite{JochensBuch} and Dierolf, Sieg \cite{DS16}. We will also put the above categories in the context of \emph{inflation exact categories} that were introduced recently by Henrard, Kvamme, van Roosmalen, Wegner \cite{HRKW}. 

\medskip

To keep this article short, we only recall the main implications\vspace{-6pt}
\[
\begin{tikzcd}[row sep=0.5cm]
&\hspace{-4pt}\begin{array}{c}\text{quasi-abelian}\\\vspace{-17pt}\end{array}\hspace{-4pt}\arrow[Rightarrow, start anchor=south east, end anchor=north west,pos=0.4]{dr}{}&&\\
\text{abelian} \arrow[Rightarrow, start anchor=north east, end anchor=south west]{ur}\arrow[Rightarrow, start anchor=south east, end anchor=north west]{dr}&& \text{semi-abelian} \arrow[Rightarrow]{r}&\text{pre-abelian}\\
&\hspace{-4pt}\begin{array}{c}\vspace{-18pt}\\\text{integral}\end{array}\hspace{-4pt}\arrow[Rightarrow, start anchor=north east, end anchor=south west]{ur}&&
\end{tikzcd}\vspace{-6pt}
\]
between major types of additive categories. For details we refer to \cite{HSW} where also a long list of examples and non-examples is given. These show in particular that none of the above implications are equivalences and that in general no implication between `quasi-abelian' and `integral' holds true.

\vspace{5pt}

\section{Core result}


A Hausdorff locally convex space $X$ is said to be an \emph{LS-space} (named after Jos\'e Sebasti\~ao e Silva \cite{Silva}), if there exists a spectrum of Banach spaces $X_1\rightarrow X_2\rightarrow X_3\rightarrow\cdots$ linked by linear, injective and compact maps, such that $X=\operatorname{ind}_{n\in\mathbb{N}}X_n$ is the locally convex inductive limit of the spectrum. 
We denote by $\LS$ the category which has the LS-spaces as objects and linear and continuous maps as morphisms. Since the LS-property inherits to closed subspaces, to quotients by closed subspaces, as well as to finite products (Floret \cite[table on p.\ 182]{Floret}), one gets that $\LS$ is a full additive subcategory of the category $\HDLCS$ formed by all Hausdorff locally convex spaces, and that it reflects the kernels and cokernels of $\HDLCS$. As $\HDLCS$ is quasi-abelian, it follows from the latter that $\LS$ is quasi-abelian, too (use, e.g., Frerick, Sieg \cite[Prop.\ 4.20]{FS}).

\medskip

More concretely, in $\LS$ the kernel of a morphism $f\colon X\rightarrow Y$ is the space $f^{-1}(0)\subseteq X$ endowed with the subspace topology and the cokernel is the quotient $Y/\overline{f(X)}$ furnished with the locally convex quotient topology. Consequently, the pullback is a closed subspace of a product and the pushout is a quotient of a product. Moreover, a morphism in $\LS$ is a kernel if and only if it is injective and has a closed range, and it is a cokernel if and only if it is surjective (use the open mapping theorem, e.g., Meise, Vogt \cite[24.30]{MV}). Having this, one can check also directly that $\LS$ is quasi-abelian \cite[p.\ 96]{JochensBuch}.

\medskip

Our main result  is the following.



\smallskip

\begin{thm}\label{MAIN-THM} The category of LS-spaces is neither left nor right integral.
\end{thm}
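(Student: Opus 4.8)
The plan is to exhibit two explicit counterexamples inside $\LS$, one breaking right integrality and one breaking left integrality, both built from the same pair of spaces: $\varphi:=\mathbb{K}^{(\mathbb{N})}$ equipped with its finest locally convex topology (an LS-space, being $\operatorname{ind}_n\mathbb{K}^n$ with finite-dimensional, hence compact, linking maps) and $s'$, the strong dual of the space $s$ of rapidly decreasing sequences (an LS-space since $s$ is Fr\'echet--Montel; see Floret \cite{Floret}). Throughout I would use the concrete descriptions recalled above: in $\LS$ a morphism is monic precisely when it is injective and epic precisely when its range is dense; the pushout of $f\colon X\to Y$ and $g\colon X\to Z$ is $(Y\times Z)/\overline{M}$ with $M=\{(f(x),-g(x)):x\in X\}$, and the structure morphism out of $Z$ is $h\colon Z\to(Y\times Z)/\overline{M}$, $z\mapsto[(0,z)]$; dually, the pullback of $f\colon Y\to X$ and $g\colon Z\to X$ is the closed subspace $P=\{(y,z)\in Y\times Z:f(y)=g(z)\}$ of $Y\times Z$, and the structure morphism into $Z$ is the coordinate projection $h\colon P\to Z$.

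For right integrality I would take $X=\varphi$, $Y=s'$ with $f$ the natural inclusion (injective and continuous, hence monic), $Z=\mathbb{K}$, and $g\colon\varphi\to\mathbb{K}$, $g(x)=\sum_n x_n$ (continuous, since every linear functional on $\varphi$ is). The aim is to show that the pushout morphism $h\colon\mathbb{K}\to(Y\times Z)/\overline{M}$ is not monic. The key point is that $M$ fails to be closed. Indeed, the unit vectors satisfy $e_n\to0$ in $s'$ (a bounded subset of $s$ is equi-rapidly-decreasing, so the $n$th coordinate functional tends to $0$ uniformly on it), so the elements $(e_1-e_n,0)=(f(e_1-e_n),-g(e_1-e_n))\in M$ converge in $Y\times Z$ to $(e_1,0)$; hence $(e_1,0)\in\overline{M}$. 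Since also $(e_1,-1)=(f(e_1),-g(e_1))\in M$, subtracting yields $(0,1)\in\overline{M}$, i.e.\ $h(1)=0$, so $h$ is not injective whereas $f$ is.

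For left integrality I would argue symmetrically with $X=s'$, $Y=\varphi$ and $f$ the same inclusion---which is now epic because $\varphi$ is dense in $s'$---together with $Z=\mathbb{K}$ and $g\colon\mathbb{K}\to s'$, $g(t)=tv$, where $v=(1,1,1,\dots)\in s'\setminus\varphi$. Then the pullback is $P=\{(\xi,t)\in\varphi\times\mathbb{K}:\xi=tv\}$, and since $tv$ is finitely supported only for $t=0$ one gets $P=0$; hence the structure morphism $h\colon P\to\mathbb{K}$ is the zero map, whose range $\{0\}$ is not dense, so $h$ is not epic, although $f$ is.

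What remains is routine verification that nothing here is subtle: that $\varphi$ and $s'$ are LS-spaces and $\varphi$ is dense in $s'$, that the spaces $P$ written above genuinely compute the categorical push- and pull-back in $\LS$ (which is exactly what the preceding description of kernels, cokernels, products and the open mapping theorem provides), and that the displayed maps are the correct structure morphisms. The one step carrying real content---and the one I would flag as the crux---is the convergence $e_n\to0$ in the strong dual topology of $s'$ (equivalently, the Montel property of $s$): this is what makes $M$ non-closed and thereby breaks monicity of the pushout, so it is precisely the place where the special nature of Silva spaces, rather than any formal feature of pre-abelian categories, is being used.
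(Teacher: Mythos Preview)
Your argument is correct. Both counterexamples work as stated: in the first, $e_n\to 0$ in $s'$ is exactly the Montel property of $s$, and the computation $(0,1)\in\overline{M}$ goes through (in fact $\overline{M}=s'\times\mathbb{K}$, so the pushout collapses to zero, but that only strengthens the conclusion); in the second, $P=0$ is immediate since $tv\in\varphi$ forces $t=0$, and the zero map into $\mathbb{K}$ is not epic.

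Your route differs from the paper's in two ways. First, the paper invokes Rump's result that for quasi-abelian categories left and right integrality coincide, and therefore produces only a single pushout counterexample; you instead give independent counterexamples on both sides, which is slightly more work but avoids the external reference. Second, and more substantively, the paper builds its example from a pair of K\"othe co-echelon spaces $k^1(V)\hookrightarrow k^1(W)$ with $v_n(j)=j^{-n}$ and $w_n(j)=\e^{-jn}$, together with a functional $\uptheta(x)=\sum_j\e^{-j}x(j)$, whereas you use the more classical pair $\varphi\hookrightarrow s'$ and the summation functional. Your choice is arguably more elementary---the spaces are textbook objects, the convergence $e_n\to 0$ in $s'$ is immediate, and no K\"othe-theoretic conditions (S) or (N) need to be checked---while the paper's choice has the advantage of living inside a parametrised family, making it transparent that the same diagram witnesses non-integrality of $\LN$ and $\PLN$ once one observes that the matrices satisfy (N). Your spaces $\varphi$ and $s'$ are of course LN-spaces as well, so this carry-over would also be available to you, but you would have to argue it separately rather than reading it off a condition on the weights.
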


\begin{proof}Since the category of LS-spaces is quasi-abelian, it is right integral if and only if it is left integral by \cite[Cor.~on p.~173]{Rump01} (see also \cite[Prop.~2.6]{HSW}). We provide a counterexample to show that $\LS$ is not right integral. For a decreasing sequence $V=(v_n)_{n\in\mathbb{N}}$ of strictly positive functions $v_n\colon\mathbb{N}\rightarrow\mathbb{R}$ we denote by
$$
k^1(V):=\mathop{\operatorname{ind}}_{n\in\mathbb{N}}\,\ell^1(v_n)
$$
the K\"othe co-echelon space of order one. The steps in the inductive limit are the Banach spaces
$$
\ell^1(v_n):=\bigl\{x\in\mathbb{K}^{\mathbb{N}}\:\big|\:\|x\|_{v_n}:=\Bigsum{j=1}{\infty}v_n(j)|x(j)|<\infty\bigr\}
$$
and the linking maps are the inclusions. Due to a classic result by K\"othe, see Bierstedt \cite[Proposition 14]{Klaus}, the space $k^1(V)$ is an LS-space if and only if the sequence $V$ satisfies condition
$$
\text{(S)}\;\;\;\forall\:n\;\exists\:m>n\colon \lim_{j\rightarrow\infty}{\textstyle\frac{\text{\small$v_m(j)$}}{\text{\small$v_n(j)$}}}=0.
$$
We now define two concrete sequences $V=(v_n)_{n\in\mathbb{N}}$ and $W=(w_n)_{n\in\mathbb{N}}$ by putting $v_n(j):=j^{-n}$ and $w_n:=\e^{-jn}$. Then both sequences enjoy (S) and since $\e^{-jn}\leqslant j^{-n}$ holds for all $n,j\in\mathbb{N}$ we get 
$$
\forall\:x\in \ell^1(v_n)\colon \|x\|_{w_n} = \Bigsum{j=1}{\infty}\e^{-jn}|x(j)| \leqslant \Bigsum{j=1}{\infty}j^{-n}|x(j)|= \|x\|_{v_n}<\infty.
$$
This means that $\ell^1(v_n)\subseteq\ell^1(w_n)\subseteq k^1(W)$ holds for every $n\in\mathbb{N}$ with continuous inclusion. But then $k^1(V)\subseteq k^1(W)$ must also hold with continuous inclusion by the universal property of the inductive limit, see, e.g., \cite[Prop. 24.7]{MV}. Let us denote the inclusion map by $i\colon k^1(V)\rightarrow k^1(W)$.

\smallskip

Next, we define the map
$$
\uptheta\colon k^1(V) \rightarrow \mathbb{K}, \hspace{10pt} \uptheta(x): = \Bigsum{j=1}{\infty}\e^{-j}\cdot{}\:x(j)
$$
which is well-defined and continuous: Indeed, for $n\in\mathbb{N}$ we choose $C>0$ such that $\e^{-j}\leqslant Cj^{-n}$ holds for all $j\in\mathbb{N}$. Then we get
$$
\forall\:x\in\ell^1(v_n)\colon |\uptheta(x)|\leqslant \Bigsum{j=1}{\infty}\e^{-j}|x(j)|\leqslant C\Bigsum{j=1}{\infty}j^{-n}|x(j)|=C\|x\|_{v_n}<\infty
$$
which means that $\uptheta\colon\ell^1(v_n)\rightarrow\mathbb{K}$ is well-defined and continuous. Now we again use \cite[Prop.~24.7]{MV}.

\smallskip

We continue by forming the pushout of $i$ along $\uptheta$ which leads, in view of what we noted about cokernels in $\LS$ right before Theorem \ref{MAIN-THM}, to the diagram
\begin{equation}\label{PO}
\begin{tikzcd}
k^1(V)\arrow[]{r}{i}\arrow{d}[swap]{\uptheta} \commutes[\text{PO}]{dr}&k^1(W) \arrow{d}{\psi}\\ \mathbb{K} \arrow{r}[swap]{\varphi} & \frac{k^1(W)\oplus{}\mathbb{K}}{\overline{\operatorname{ran}[i\:\shortminus\uptheta]^{\operatorname{T}}}}
\end{tikzcd}
\end{equation}
where $\varphi$ and $\psi$ are the natural maps. We claim now that $\varphi$ is not injective and thus not a monomorphism in $\LS$. This can be seen by considering the sequence $(x^{(k)})_{k\in\mathbb{N}}\subseteq k^1(V)$ defined by $x^{(k)}(j) = (\e^1\hspace{-2pt}/k, \e^2\hspace{-2pt}/k, \hdots, \e^k\hspace{-2pt}/k, 0, \hdots) $. Applying $\uptheta$ to each member of the sequence leads to
$$
\uptheta(x^{(k)}) =\Bigsum{j=1}{\infty} \e^{-j}\cdot\:x^{(k)}(j)= \Bigsum{j=1}{k} \e^{-j}\cdot\: e^j\hspace{-1.5pt}/k = \Bigsum{j=1}{k}  1/k = 1.
$$
Applying $i$ means considering $(x^{(k)})_{k\in\mathbb{N}}\subseteq k^1(W)$, and since the sequence has only finitely many non-zero entries, it is contained in every step space. We consider it in $\ell^1(w_2)$, where we obtain
$$
\|x^{(k)}\|_{w_2} = \Bigsum{j=1}{\infty}\e^{-2j}|x^{(k)}(j)| = \Bigsum{j=1}{k} \e^{-2j}\cdot\:\e^j\hspace{-3pt}/k \leqslant 1/k\cdot\Bigsum{j=1}{\infty}\e^{-j}\xrightarrow{k\longrightarrow \infty} 0
$$
from whence it follows that $(x^{(k)})_{k\in\mathbb{N}}$ converges to zero in $k^1(W)$. Combining the above implies
$$
{\textstyle\columnvec{\phantom{\shortminus}0}{\shortminus1}}\in\overline{\big\{{\textstyle\columnvec{\phantom{\shortminus}i}{\shortminus\uptheta}}(x)\in k^1(W)\oplus\mathbb{K}\:\big|\:x\in k^1(V)\bigr\}}
$$
which means that $\varphi(\shortminus1)={\textstyle\columnvec{\phantom{\shortminus}0}{\shortminus1}}+\overline{\operatorname{ran}[i\:\shortminus\hspace{-1.5pt}\uptheta]^{\operatorname{T}}}=0$ and establishes the claim.
\end{proof}

\smallskip



\begin{rmk}\label{REM} The method used in the proof above is a refinement of that one applied in \cite[Thm.~3.2]{HSW} and was developed by the first-named author in her thesis \cite{MarianneBSc}. We would like to point out the following two observations to the reader:\vspace{3pt}

\begin{compactitem}

\item[(i)] That we were able to construct, as a counterexample, a pushout diagram with the most simple (non-trivial) LS-space $\mathbb{K}$ in the lower left corner, was not a happy coincidence, but virtually possible `without loss of generality': Let us assume that we are given any counterexample, i.e., a diagram 
\begin{equation*}
\begin{tikzcd}
X\arrow{r}{f}\arrow{d}[swap]{g} \commutes[\text{PO}]{dr}& Y \arrow{d}{j}\\ Z \arrow{r}[swap]{h} & P
\end{tikzcd}
\end{equation*}
in which $f$ is monic and $h$ is not. We pick $0\not=\zeta\in Z$ with $h(\zeta)=0$. Next, we apply the Hahn-Banach theorem which gives us a morphism $\zeta^*\colon Z\rightarrow\mathbb{K}$ with $\zeta^*(\zeta)\not=0$. Finally, we form the pushout of $h$ and $\zeta^*$ to obtain the following diagram
\begin{equation*}
\begin{tikzcd}
X\arrow{r}{f}\arrow{d}[swap]{g} & Y \arrow{d}{j}\\ Z\arrow{d}[swap]{\zeta^*} \arrow{r}{h} \commutes[\text{PO}]{dr}& P\arrow{d}{j'}\\
 \mathbb{K} \arrow{r}[swap]{h'} & P'
\end{tikzcd}
\end{equation*}
where the outer rectangle is a `new' counterexample, now with $\mathbb{K}$ in the lower left corner. Notice that the above argument works in any full subcategory of the category of Hausdorff locally convex spaces that contains $\mathbb{K}$ as an object and which has arbitrary pushouts.

\vspace{3pt}

\item[(ii)] There is no counterexample in which the monomorphism $f$ is a topological embedding: Indeed, using that LS-spaces are always complete (see Floret, Wloka \cite[Kor.\ on p.\ 146]{FW}), $X$ being isomorphic to $f(X)$ implies that $f(X)\subseteq Y$ is closed, and thus $f$ is a kernel. Since $\LS$ is quasi-abelian, $h$ is a kernel and therefore in particular monic.

\end{compactitem}

\end{rmk}

\section{Consequences and further results}

A Hausdorff locally convex space $X$ is a \emph{PLS-space} if $X=\operatorname{proj}_{n\in\mathbb{N}}X_n$ is the projective limit of a strongly reduced spectrum $\dots\rightarrow X_3\rightarrow X_2\rightarrow X_1$ of LS-spaces. We denote by $\PLS$ the category with PLS-spaces as objects and linear and continuous maps as morphisms. For a morphism $f\colon X\rightarrow Y$ the kernel is  computed by the same formula as in $\LS$ but the cokernel is the completion of $Y/\overline{f(X)}$, see Sieg \cite[Proof of Prop.\ 3.1.3]{SiegThesis}. Since $\LS\subseteq\PLS$ reflects cokernels we can nevertheless recycle our counterexample from Theorem \ref{MAIN-THM} as well as the dual counterexample (which must exist as $\LS$ is quasi-abelian) and obtain:

\begin{cor}\label{MAIN-THM-2} The category of PLS-spaces is neither left nor right integral.\hfill\qed
\end{cor}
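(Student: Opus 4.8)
The plan is to re-use, inside $\PLS$, both the explicit pushout square \eqref{PO} from the proof of Theorem~\ref{MAIN-THM} and its formal dual. The one subtle point is that in an additive category pushouts are cokernels of maps into a finite biproduct and pullbacks are kernels of maps out of one, while in $\PLS$ the cokernel of $f\colon X\to Y$ is the \emph{completion} of $Y/\overline{f(X)}$ rather than $Y/\overline{f(X)}$ itself. So first I would check that the inclusion $\LS\hookrightarrow\PLS$ preserves finite biproducts (the biproduct of two LS-spaces is their locally convex direct sum in either category, as this lands in $\LS$ and both categories are full subcategories of $\HDLCS$), preserves kernels (the formula $f^{-1}(0)$ works in both), and preserves cokernels \emph{of morphisms between LS-spaces}: for such $f$ the space $Y/\overline{f(X)}$ is a quotient of an LS-space by a closed subspace, hence again an LS-space by the inheritance properties recalled above, hence complete, so the completion in the $\PLS$-cokernel does nothing. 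It then follows that pushouts and pullbacks of diagrams of LS-spaces are computed identically in $\LS$ and in $\PLS$.

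Granting this, the right-integral direction is immediate: every object and every arrow of \eqref{PO} lies in $\LS$, so \eqref{PO} is a pushout square in $\PLS$ as well. The map $i$ is injective, hence a monomorphism of $\PLS$ (its kernel, computed by the same formula as in $\LS$, is $0$), whereas $\varphi(\shortminus1)=0$ forces $\varphi$ to vanish on all of $\mathbb{K}$, so $\varphi$ is not a monomorphism. Hence $\PLS$ is not right integral.

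For the left-integral direction I would dualize. Since $\LS$ is quasi-abelian and, by the above, not right integral, \cite[Cor.\ on p.\ 173]{Rump01} forces $\LS$ to fail to be left integral; concretely there is a pullback square in $\LS$ one of whose sides is an epimorphism while the parallel side is not. All four objects of that square are LS-spaces, so by the first step it is again a pullback square in $\PLS$. Finally, a morphism $u\colon A\to B$ of $\PLS$ between LS-spaces is an epimorphism if and only if its $\PLS$-cokernel is $0$, i.e.\ the completion of $B/\overline{\ran u}$ is $0$, i.e.\ $B/\overline{\ran u}=0$, i.e.\ $u$ has dense range --- precisely the criterion characterising epimorphisms in $\LS$. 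Hence the epimorphism side stays an epimorphism in $\PLS$ and the parallel side stays a non-epimorphism, so $\PLS$ is not left integral.

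The hard part will be the bookkeeping of the first step: one must be confident that pushouts and pullbacks computed in $\PLS$ genuinely agree with those computed in $\LS$ on subdiagrams living in $\LS$, and this rests solely on the completeness of quotients of LS-spaces, which makes the completion in Sieg's cokernel formula vacuous. Everything after that is a verbatim transcription of the proof of Theorem~\ref{MAIN-THM} and of its formal dual.
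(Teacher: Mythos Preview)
Your proposal is correct and takes essentially the same approach as the paper: observe that $\LS\subseteq\PLS$ reflects cokernels (quotients of LS-spaces being already complete), recycle the pushout \eqref{PO} for the right-integral failure, and for the left-integral direction invoke quasi-abelianness of $\LS$ to obtain a dual counterexample in $\LS$ that transfers to $\PLS$. Your version is simply more explicit about the bookkeeping (biproducts, kernels, pullbacks, preservation of epimorphisms) that the paper compresses into the single phrase ``$\LS\subseteq\PLS$ reflects cokernels''.
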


Applying Buan, Marsh \cite[Prop.\ 3.9]{BuanMarsh} (see also \cite[Prop. 5.2]{HSW}) leads to:

\begin{cor}\label{PROJ-INJ} The categories of LS-spaces and PLS-spaces neither have enough projective objects nor do they have enough injective objects.\hfill\qed
\end{cor}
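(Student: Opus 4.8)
The plan is to deduce Corollary \ref{PROJ-INJ} from Corollary \ref{MAIN-THM-2} together with the general principle, due to Buan--Marsh \cite[Prop.\ 3.9]{BuanMarsh}, that a pre-abelian category with enough projectives (dually, enough injectives) in which projectives are closed under the relevant constructions is automatically left (dually, right) integral. Concretely, the implication we want to invoke reads: \emph{if a pre-abelian category has enough projective objects, then monomorphisms are stable under pushout}, i.e.\ the category is right integral; dually, enough injectives forces left integrality. Since both $\LS$ and $\PLS$ have been shown in Theorem \ref{MAIN-THM} and Corollary \ref{MAIN-THM-2} to be neither left nor right integral, neither of them can have enough projectives and neither can have enough injectives.

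The steps I would carry out are therefore: first, recall the precise statement of \cite[Prop.\ 3.9]{BuanMarsh} (equivalently \cite[Prop.\ 5.2]{HSW}) in the form ``enough projectives $\Rightarrow$ right integral'' and ``enough injectives $\Rightarrow$ left integral'', checking that the cited result applies to pre-abelian (indeed quasi-abelian) categories without extra hypotheses. Second, apply the contrapositive to $\LS$: by Theorem \ref{MAIN-THM} the category $\LS$ is not right integral, hence it does not have enough projective objects; it is also not left integral, hence it does not have enough injective objects. Third, repeat verbatim for $\PLS$ using Corollary \ref{MAIN-THM-2}. That is essentially the whole argument; it is a two-line deduction once the external proposition is quoted.

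The only point that needs a moment's care---and the closest thing to an obstacle---is making sure the hypotheses of \cite[Prop.\ 3.9]{BuanMarsh} are genuinely met by $\LS$ and $\PLS$, i.e.\ that ``pre-abelian'' (which both categories are, being quasi-abelian as noted in Section~2) is all that is required, and that no auxiliary assumption such as idempotent-completeness or existence of a generator is silently in force. Since both categories are quasi-abelian, hence pre-abelian, and the cited propositions are stated at that level of generality, this is unproblematic; one simply records the citation. No further computation is needed, and the proof can legitimately be replaced by \verb|\hfill\qed| on the statement line, exactly as printed, with the one-paragraph justification above folded into the surrounding text.

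Thus the write-up is: quote Buan--Marsh \cite[Prop.\ 3.9]{BuanMarsh} (and \cite[Prop.\ 5.2]{HSW}) to the effect that enough projectives implies right integrality and enough injectives implies left integrality in any pre-abelian category; then invoke Theorem \ref{MAIN-THM} and Corollary \ref{MAIN-THM-2} to conclude that $\LS$ and $\PLS$, being neither left nor right integral, can have neither enough projective nor enough injective objects.
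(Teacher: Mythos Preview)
Your proposal is correct and is exactly the paper's own argument: the paper simply writes ``Applying Buan, Marsh \cite[Prop.~3.9]{BuanMarsh} (see also \cite[Prop.~5.2]{HSW}) leads to:'' and then prints the corollary with a \verb|\qed|. One small caveat: the standard direction of the implication is that enough projectives forces \emph{left} integrality (epimorphisms stable under pullback) and enough injectives forces \emph{right} integrality, the opposite of what you wrote; but since Theorem~\ref{MAIN-THM} and Corollary~\ref{MAIN-THM-2} rule out \emph{both} left and right integrality, the contrapositive yields the conclusion either way and your argument goes through unchanged.
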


In contrast to $\LS$, the category $\PLS$ is not even left semi-abelian \cite[Prop. 3.1.6]{SiegThesis}. It thus follows from \cite[p.\ 169, Cor.\ 1]{Rump01} (see also \cite[Prop. 2.5]{HSW}) that $\PLS$ does not have enough quasi-projectives (replace `epimorphism' with `cokernel' everywhere in the definition). It is open if $\PLS$ has enough quasi-injectives, or, taking its natural exact structure $\mathbb{E}:=\mathbb{E}_{\operatorname{max}}=\mathbb{E}_{\operatorname{top}}$, see \cite[Prop.\ 3.2.1]{SiegThesis}, into account, enough $\mathbb{E}$-injectives in the notation of \cite{BHT21, BE} (`inflation' instead of `monomorphism').


\smallskip

Replacing `compact' with `weakly compact' or `nuclear' in the definition of LS-spaces leads to the categories of LS$_{\text{w}}$-spaces, and LN-spaces, respectively. Taking strongly reduced countable projective limits of spaces in the two classes gives rise to PLS$_{\text{w}}$-spaces and PLN-spaces. We have the following full inclusions\vspace{-2pt}
\begin{equation}\label{Scheme}
\begin{tikzcd}[column sep=0pt, row sep=0pt]
\PLN&\subseteq&\PLS&\subseteq&\PLSw \\
\text{\rotatebox{90}{$\subseteq$}}&&\text{\rotatebox{90}{$\subseteq$}}&&\text{\rotatebox{90}{$\subseteq$}}\\
\LN&\subseteq&\LS&\subseteq&\LSw \\
\end{tikzcd}
\end{equation}
of pre-abelian categories. Indeed, in $\LN$ and $\PLN$ kernels and cokernels are computed by the same formulas as in $\LS$ and $\PLS$, respectively (for kernels this follows from Doma\'nski, Vogt \cite[Prop.~1.2]{DV2000}, for cokernels in $\LN$ use \cite[Prop.~1.3]{DV2000}, and for cokernels in $\PLN$ proceed as in \cite[Proof of Prop.~3.1.3]{SiegThesis}). Since the K\"othe matrices $V$ and $W$, which we employed in  Theorem \ref{MAIN-THM}, satisfy condition\vspace{-2pt}
$$
\text{(N)}\;\;\;\forall\:n\;\exists\:m>n\colon 
\Bigsum{j=1}{\infty}{\textstyle\frac{\text{\small$v_m(j)$}}{\text{\small$v_n(j)$}}}<\infty,\vspace{-2pt}
$$
it follows from a classic result by Grothendieck, see Bierstedt \cite[Prop.\ 15]{Klaus}, that $k^1(V)$ and $k^1(W)$ are indeed LN-spaces. The diagram \eqref{PO} is thus a pushout in $\LN$ and in $\PLN$. Since $\LN\subseteq\LS$ reflects kernels and cokernels, $\LN$ is quasi-abelian and thus there exists a dual counterexample in $\LN$. Since $\LN\subseteq\PLN$ reflects kernels and cokernels the example applies in $\PLN$, too. We thus get:
 
\begin{cor} The categories of LN-spaces and PLN-spaces are neither left nor right integral. Furthermore, they neither have enough projective objects nor enough injective objects.\hfill\qed
\end{cor}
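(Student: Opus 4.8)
The plan is to re-use, essentially verbatim, the pushout square \eqref{PO} constructed in the proof of Theorem~\ref{MAIN-THM}, after checking that everything appearing in it already lives, and is computed the same way, in $\LN$ and in $\PLN$. The first step is to note that the two K\"othe matrices chosen there, $v_n(j)=j^{-n}$ and $w_n(j)=\e^{-jn}$, satisfy the summability condition (N): for $V$ one has $\sum_{j}v_m(j)/v_n(j)=\sum_{j}j^{-(m-n)}<\infty$ whenever $m\geqslant n+2$, and for $W$ the quotients sum to $\sum_{j}\e^{-j(m-n)}<\infty$ for every $m>n$. By Grothendieck's theorem in the form of Bierstedt \cite[Prop.\ 15]{Klaus}, condition (N) is exactly what turns $k^1(V)$ and $k^1(W)$ into LN-spaces, so both --- together with the (trivially nuclear) space $\mathbb{K}$ --- are objects of $\LN$, and hence also of $\PLN$.

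The second step is to verify that the square \eqref{PO} is still a pushout once read inside $\LN$, respectively $\PLN$. Here I would invoke that in $\LN$ the kernel and cokernel of a morphism are given by the same formulas as in $\LS$ --- for kernels this is Doma\'nski, Vogt \cite[Prop.~1.2]{DV2000}, for cokernels \cite[Prop.~1.3]{DV2000} --- and that cokernels in $\PLN$ are obtained as in $\PLS$ by running the argument of Sieg \cite[Proof of Prop.~3.1.3]{SiegThesis}. Consequently $\LN$ (resp. $\PLN$) is a full pre-abelian subcategory of $\HDLCS$ reflecting the kernels and cokernels of $\LS$ (resp. of $\PLS$), so the pushout of $i$ along $\uptheta$ --- a cokernel of a morphism out of a product --- is formed by exactly the same construction as in \eqref{PO}. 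The scalar computation from Theorem~\ref{MAIN-THM} giving $\varphi(\shortminus1)=0$ is then unchanged, so $\varphi$ is not monic and neither $\LN$ nor $\PLN$ is right integral.

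For the remaining claims I would proceed as the text already does for $\LS$ and $\PLS$. Since $\LN$ reflects the kernels and cokernels of the quasi-abelian category $\HDLCS$, it is itself quasi-abelian (use, e.g., Frerick, Sieg \cite[Prop.\ 4.20]{FS}), and Rump's equivalence of left and right integrality in the quasi-abelian setting \cite[Cor.~on p.~173]{Rump01} gives that $\LN$ is not left integral either; concretely this produces a pullback of a cokernel which fails to be epic. Because $\LN\subseteq\PLN$ again reflects kernels and cokernels, both this counterexample and its dual carry over to $\PLN$, which is therefore neither left nor right integral. Finally, feeding non-integrality into Buan, Marsh \cite[Prop.\ 3.9]{BuanMarsh} exactly as in Corollary~\ref{PROJ-INJ} yields that neither $\LN$ nor $\PLN$ has enough projective or enough injective objects.

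I expect the only point requiring genuine care to be the second step: one must make sure that the cited kernel and cokernel formulas really do hold in $\LN$ and $\PLN$, since it is precisely the resulting ``reflects kernels and cokernels'' that guarantees the pushout \eqref{PO} is the same object here and that the single identity $\varphi(\shortminus1)=0$ still certifies non-injectivity. Once the reflection statements are in place, nothing new has to be proved --- everything reduces to the citations already made for the compact case.
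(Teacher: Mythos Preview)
Your proposal is correct and follows essentially the same route as the paper: verify that $V$ and $W$ satisfy condition (N) so that $k^1(V)$ and $k^1(W)$ are LN-spaces, use the kernel/cokernel reflection statements (Doma\'nski--Vogt, Sieg) to conclude that the pushout \eqref{PO} is computed identically in $\LN$ and $\PLN$, invoke quasi-abelianness of $\LN$ together with Rump's equivalence to obtain the dual counterexample, transfer both to $\PLN$ via reflection, and finish with Buan--Marsh. Your write-up is in fact more detailed than the paper's terse paragraph preceding the corollary, but the logical skeleton is the same.
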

 
In the categories $\LSw$ and $\PLSw$ cokernels compute again as in $\LS$ and $\PLS$, respectively. Kernels however do not: Given a morphism $f\colon \operatorname{ind}_{n\in\mathbb{N}}X_n\rightarrow \operatorname{ind}_{n\in\mathbb{N}}Y_n$ of LS$_{\text{w}}$-spaces the kernel is
\begin{equation}\label{ker}
\mathop{\operatorname{ind}}_{n\in\mathbb{N}}\,(f^{-1}(0)\cap X_n)\rightarrow\mathop{\operatorname{ind}}_{n\in\mathbb{N}}\,X_n
\end{equation}
where the topology on the left is in general strictly finer than the subspace topology \cite[Rmk.~3.1.1 and Cor.~3.1.3(i)]{DS16}. For morphisms between PLS$_{\text{w}}$-spaces the same has to be done for every step in the projective limit \cite[Prop.~3.1.5]{DS16}. Nevertheless, in both categories it is true that a morphism $f$ is monic if and only if it is injective, and $\LS\subseteq\LSw$ and $\PLS\subseteq\PLSw$ reflect kernels (as, for LS-spaces, the topology defined in \eqref{ker} coincides with the subspace topology). Moreover, $\LSw$ is quasi-abelian \cite[p.~1307 and Cor.\ 3.1.3(i)]{DS16}. By recycling our example from Theorem \ref{MAIN-THM} a third time we arrive at:

\begin{cor} The categories of LS$_\text{w}$-spaces and PLS$_\text{w}$-spaces are neither left nor right integral. Neither of them have enough projective objects nor enough injective objects.\hfill\qed
\end{cor}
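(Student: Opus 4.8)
The plan is to recycle the counterexample constructed in the proof of Theorem \ref{MAIN-THM} a third time — once directly, for the failure of right integrality, and once in dualized form, for the failure of left integrality — and then to deduce the statement on projective and injective objects from Buan--Marsh \cite[Prop.\ 3.9]{BuanMarsh} exactly as in Corollary \ref{PROJ-INJ}.

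For right integrality I would argue as follows. In both $\LSw$ and $\PLSw$ a morphism is monic if and only if it is injective, so $i\colon k^1(V)\rightarrow k^1(W)$ is monic in each. The pushout of $i$ along $\uptheta$ is the cokernel of $[i\:\shortminus\uptheta]^{\operatorname{T}}\colon k^1(V)\rightarrow k^1(W)\oplus\mathbb{K}$, and cokernels in $\LSw$ and $\PLSw$ are computed as in $\LS$ and $\PLS$, respectively. The space $\frac{k^1(W)\oplus\mathbb{K}}{\overline{\operatorname{ran}[i\:\shortminus\uptheta]^{\operatorname{T}}}}$ in the lower-right corner of \eqref{PO} is a quotient of the LS-space $k^1(W)\oplus\mathbb{K}$ by a closed subspace, hence again an LS-space and in particular complete; therefore the $\PLS$-cokernel, which is the completion of that quotient, coincides with the $\LS$-cokernel, and the same holds with `w' added throughout. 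Thus \eqref{PO} is a pushout diagram in $\LSw$ and in $\PLSw$. Since $\varphi$ fails to be injective — by the computation carried out verbatim in the proof of Theorem \ref{MAIN-THM} — it is not monic, and neither category is right integral.

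For left integrality the case of $\LSw$ is immediate: $\LSw$ is quasi-abelian, so by Rump \cite[Cor.\ on p.\ 173]{Rump01} it is left integral if and only if it is right integral, and we have just seen it is not the latter. The category $\PLSw$ need not be quasi-abelian, so here I would instead transport the dual counterexample available in $\LS$ (which exists because $\LS$ is quasi-abelian): a pullback square of LS-spaces in which one leg of the cospan is an epimorphism while the corresponding morphism into the pullback is not. The point to check is that this pullback is computed the same way in $\PLSw$ as in $\LS$: although kernels, and hence pullbacks, in $\PLSw$ are in general strictly finer, for morphisms between LS-spaces the topology in \eqref{ker} coincides with the subspace topology, and $\PLS\subseteq\PLSw$ reflects kernels, so the two constructions agree on this diagram. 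Since an epimorphism in $\LS$, in $\PLS$ and in $\PLSw$ is precisely a morphism with dense range, the epi leg of the cospan remains epi and the offending morphism remains non-epi after passage to $\PLSw$; hence $\PLSw$ is not left integral. Knowing now that $\LSw$ and $\PLSw$ are pre-abelian and neither left nor right integral, Buan--Marsh \cite[Prop.\ 3.9]{BuanMarsh} applies just as in Corollary \ref{PROJ-INJ} and shows that neither category has enough projective nor enough injective objects.

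The step I expect to be the main obstacle is the transport argument for $\PLSw$: one must be sure that the two concrete diagrams in play — the pushout \eqref{PO} and the dual pullback square inherited from $\LS$ — are genuinely computed the same way in $\PLSw$ as in $\LS$, notwithstanding that general kernels in $\PLSw$ are strictly finer than the subspace topology. The saving grace is that every space occurring in these two particular diagrams is an LS-space, where the relevant kernel topologies collapse to the subspace topology, so that the general discrepancy between the categories never materializes for the objects we actually use.
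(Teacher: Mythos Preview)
Your proposal is correct and follows essentially the same approach as the paper: recycle the pushout \eqref{PO} using that cokernels in $\LSw$ and $\PLSw$ compute as in $\LS$ and $\PLS$ and that monic $=$ injective; handle left integrality for $\LSw$ via Rump's equivalence for quasi-abelian categories and for $\PLSw$ by transporting the dual $\LS$-counterexample through the kernel-reflecting inclusions $\LS\subseteq\PLS\subseteq\PLSw$; then invoke Buan--Marsh for projectives and injectives. Your write-up is in fact more explicit than the paper's, which relegates the argument to the paragraph preceding the corollary and marks the statement itself with a bare \qed.
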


We mentioned already that the categories in the bottom row of \eqref{Scheme} are quasi-abelian. For the top row this is not the case and indeed a particular aim of \cite{DS16, SiegThesis} (see also \cite{Septimui, SW}) was to establish natural exact structures on $\PLS$, $\PLSw$ and $\PLN$. We conclude our article by relating the latter approach to the notion of \emph{inflation-exact categories with admissible cokernels} as defined recently in \cite{HRKW}. This new notion allows for the use of homological algebra with respect to the class $\mathbb{C}_{\text{all}}$ of all kernel-cokernel pairs in cases where the latter is not an exact structure in the classical Quillen sense.

\begin{thm}\label{PROP} Endowed with the class $\mathbb{C}_{\text{all}}$ of all kernel-cokernel pairs as conflations, the categories $\PLS$, $\PLSw$ and $\PLN$ are inflation-exact categories with admissible cokernels. The class $\mathbb{C}_{\text{all}}$ is not an exact structure.
\end{thm}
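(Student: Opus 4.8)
The plan is to verify, one by one, the axioms defining an inflation-exact category with admissible cokernels from \cite{HRKW} for the conflation category $(\mathcal{A},\mathbb{C}_{\text{all}})$, where $\mathcal{A}$ is any of $\PLS$, $\PLSw$ and $\PLN$, and then to deduce that $\mathbb{C}_{\text{all}}$ is not a Quillen exact structure from the fact that the three categories are not quasi-abelian. All three are pre-abelian, and $\mathbb{C}_{\text{all}}$ is by construction a class of kernel-cokernel pairs which is closed under isomorphism and contains the split conflations, so $(\mathcal{A},\mathbb{C}_{\text{all}})$ is a conflation category; what remains is to check that (i) inflations are closed under composition, (ii) the pushout of an inflation along an arbitrary morphism exists and is again an inflation, (iii) the one-sided obscure axiom, and (iv) the admissible-cokernels condition. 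It helps to record at the outset that in $\PLS$ and $\PLN$ the inflations---that is, the kernels---are precisely the topological embeddings with closed range, by the description of cokernels recalled before Corollary~\ref{MAIN-THM-2}, while in $\PLSw$ they form the slightly larger class governed by the inductive topology in~\eqref{ker}; in all cases the deflations are the canonical maps onto the (completed) Hausdorff quotient.

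For the positive direction, condition (i) is point-set in nature: in $\PLS$ and $\PLN$ a composite of two topological embeddings with closed range is again one, since such an embedding is a closed map onto its image, and in $\PLSw$ one additionally invokes \cite[Prop.~3.1.5]{DS16} so that the stepwise topologies in~\eqref{ker} compose correctly. Condition (iv) holds because $\mathbb{C}_{\text{all}}$ is the class of \emph{all} kernel-cokernel pairs, so that every cokernel is a deflation, and (iii) then follows from the pushout axiom by the usual argument, see \cite{HRKW}.

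The heart of the proof, and the step I expect to require the most care, is (ii). Given an inflation $i\colon X\to Y$ and an arbitrary morphism $g\colon X\to Z$, the pushout is the cokernel of $[i\:\shortminus g]^{\operatorname{T}}\colon X\to Y\oplus Z$, hence the completion $P$ of $(Y\oplus Z)/\overline{\Gamma}$ with $\Gamma:=\operatorname{ran}[i\:\shortminus g]^{\operatorname{T}}$. I would proceed in three steps. First, $\Gamma$ is in fact closed in $Y\oplus Z$: a net in $\Gamma$ convergent in $Y\oplus Z$ forces, because $i$ is an embedding with closed range, a convergent net in $X$, and applying $g$ pins down the limit of the second coordinates. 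Second, the canonical map $\kappa\colon Z\to P$ is injective, again because $i$ is an embedding, so that a net $(i(x_\alpha),\shortminus g(x_\alpha))$ tending to $(0,z)$ forces $x_\alpha\to 0$ and hence $z=0$. Third, one identifies $\operatorname{ran}\kappa$ with the image of $i(X)\oplus Z$ under the quotient map and uses the elementary ``shear'' homeomorphism $(i(X)\oplus Z)/\Gamma\cong Z$, together with the fact that the quotient of a topological vector space by a subspace of a closed subspace topologically embeds, to conclude that $\kappa$ is a topological embedding onto its image; since $Z$ is complete, this image is a complete and hence closed subspace of $P$, so $\kappa$ is an inflation. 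For $\PLSw$ one runs the same scheme while tracking the finer inductive topologies on kernels via \cite{DS16}. I would stress that this is exactly the point at which things could a priori go wrong, but they do not---the completion in the cokernel formula only affects the deflation side.

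It remains to show that $\mathbb{C}_{\text{all}}$ is not an exact structure, the obstruction being the failure of stability of deflations under pullback. Since the cokernel of a morphism in these categories is in general a \emph{non-surjective} completion map---which is exactly why a completion appears in the cokernel formula, see \cite[Proof of Prop.~3.1.3]{SiegThesis}---there is, in each of the three categories, a conflation $N\rightarrowtail Y\twoheadrightarrow C$ with deflation $p\colon Y\to C$ not onto (for $\PLSw$ one may take such a conflation between PLS-spaces, which remains a conflation there because $\PLS\subseteq\PLSw$ reflects kernels and cokernels). Choosing $\zeta\in C\setminus p(Y)$ and forming the pullback of $p$ along the morphism $\mathbb{K}\to C$, $t\mapsto t\zeta$, one computes that the pullback object is isomorphic to $N$ and that the pullback of $p$ along this morphism is the zero map $N\to\mathbb{K}$, which is not a deflation (its kernel is all of $N$, so it would have to be the map onto the zero object, whereas $\mathbb{K}\neq 0$). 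Hence a pullback of a deflation need not be a deflation and $\mathbb{C}_{\text{all}}$ is not exact; for $\PLS$ this also follows from the general fact that a pre-abelian category on which $\mathbb{C}_{\text{all}}$ is a Quillen exact structure must be quasi-abelian, together with the fact that $\PLS$ is not left semi-abelian, see \cite[Prop.~3.1.6]{SiegThesis}.
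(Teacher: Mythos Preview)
Your argument is correct in outline but takes a considerably more hands-on route than the paper. The paper dispatches the theorem in two strokes: it records that each of the three categories is right quasi-abelian but not left quasi-abelian (citing \cite[Prop.~3.1.9]{DS16} for $\PLSw$, noting that the same proof gives the positive half for $\PLS$ and $\PLN$, and citing \cite[Prop.~3.1.6]{SiegThesis} for the negative half, with the example in $\mathcal{D}'(\Omega)$ covering $\PLN$ as well), and then invokes the dual of \cite[Rmk.~4.10(1)]{HRKW}, which packages precisely the implication ``right quasi-abelian $\Rightarrow$ $(\mathcal{A},\mathbb{C}_{\text{all}})$ is inflation-exact with admissible cokernels'' together with ``$\mathbb{C}_{\text{all}}$ is a Quillen exact structure iff $\mathcal{A}$ is quasi-abelian''. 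So rather than checking axioms (i)--(iv) one by one, the paper outsources all of this to a single structural remark. Your step~(ii)---the closed-graph and shear argument showing that kernels push out to kernels---is in effect a direct reproof of right quasi-abelianness, and it is correct for $\PLS$ and $\PLN$; for $\PLSw$, however, you only gesture at ``tracking the finer inductive topologies'', which is exactly the delicate point that \cite{DS16} has to work for. For the negative part, your explicit pullback along a point outside the range of a non-surjective deflation is vivid but presupposes that such a deflation exists in each of the three categories; the uniform argument you append at the end---$\mathbb{C}_{\text{all}}$ exact $\Leftrightarrow$ quasi-abelian, and none of the three is even left semi-abelian---is cleaner and is essentially what the paper does. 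In sum, your approach is more self-contained and makes the pushout axiom concretely visible, at the cost of length and an under-argued $\PLSw$ case; the paper's approach trades that transparency for brevity by delegating both halves to the existing literature.
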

\begin{proof} All three categories are right quasi-abelian but not left quasi-abelian in the notation of \cite[Dfn.~2.3]{HSW}. For $\PLSw$ this follows from \cite[Prop.~3.1.9]{DS16}. For $\PLS$ and $\PLN$ the `right' part can be proved in exactly the same way. The `not left' part for $\PLS$ follows from the fact that $\PLS$ is not left semi-abelian \cite[Prop.~3.1.6]{SiegThesis}. As the example uses a quotient of the space $\mathcal{D}'(\Omega)$, it applies to $\PLN$, too. Using the dual of  \cite[Rmk.~4.10(1)]{HRKW} establishes the claim.
\end{proof}

We refer to \cite{HRKW} for various implications of Theorem \ref{PROP}, including the means for defining the derived category. The following illustrates the trade-off that comes with switching from a (natural) exact structure $\mathbb{E}$ to the conflation structure $\mathbb{C}_{\text{all}}$ on the categories covered by Theorem \ref{PROP}:

\smallskip

\begin{cor}\label{COR-8} Let $\mathcal{A}\in\{\PLS,\,\PLSw,\,\PLN\}$.
\begin{compactitem}

\vspace{4pt}

\item[(i)] There is a natural triangle equivalence $\mathbf{D}^{\text{b}}(\mathcal{A},\mathbb{C}_{\text{all}})\rightarrow\mathbf{D}^{\text{b}}(\mathcal{RH}(\mathcal{A}))$ with $\mathcal{RH}(\mathcal{A})$  being the heart of a natural t-structure on the derived category.

\vspace{4pt}

\item[(ii)] The natural triangle functor $\mathbf{D}^{\text{b}}(\mathcal{A},\mathbb{E})\rightarrow\mathbf{D}^{\text{b}}(\mathcal{A},\mathbb{C}_{\text{all}})$ is neither an equivalence for the maximal exact structure $\mathbb{E}=\mathbb{E}_{\text{max}}$ nor for any other exact structure.
\end{compactitem}
\end{cor}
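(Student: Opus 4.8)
The plan is to derive both statements from Theorem~\ref{PROP} together with the homological machinery developed in \cite{HRKW}; the only additional ingredient needed is the elementary remark that $\mathbb{C}_{\text{all}}$ properly contains every exact structure on $\mathcal{A}$. For part~(i), recall from Theorem~\ref{PROP} and its proof that each $\mathcal{A}\in\{\PLS,\PLSw,\PLN\}$, equipped with $\mathbb{C}_{\text{all}}$, is an inflation-exact category with admissible cokernels which is moreover right quasi-abelian. This is precisely the situation covered by the generalization of Schneiders' theorem in \cite{HRKW}: the derived category $\mathbf{D}^{\text{b}}(\mathcal{A},\mathbb{C}_{\text{all}})$ then carries a natural t-structure whose heart $\mathcal{RH}(\mathcal{A})$ is abelian, and the associated realization functor $\mathbf{D}^{\text{b}}(\mathcal{RH}(\mathcal{A}))\rightarrow\mathbf{D}^{\text{b}}(\mathcal{A},\mathbb{C}_{\text{all}})$ is a triangle equivalence. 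The equivalence asserted in (i) can then be taken to be a quasi-inverse of this realization functor, so here there is essentially nothing to prove beyond matching the output of Theorem~\ref{PROP} with the hypotheses of the cited result.

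For part~(ii), let $\mathbb{E}'$ be an arbitrary exact structure on $\mathcal{A}$. Its conflations are kernel-cokernel pairs, so $\mathbb{E}'\subseteq\mathbb{C}_{\text{all}}$, and since an $\mathbb{E}'$-acyclic complex is $\mathbb{C}_{\text{all}}$-acyclic the localization functor $\mathbf{K}^{\text{b}}(\mathcal{A})\rightarrow\mathbf{D}^{\text{b}}(\mathcal{A},\mathbb{C}_{\text{all}})$ factors through a natural triangle functor $F\colon\mathbf{D}^{\text{b}}(\mathcal{A},\mathbb{E}')\rightarrow\mathbf{D}^{\text{b}}(\mathcal{A},\mathbb{C}_{\text{all}})$. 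I would show that $F$ is not conservative, which already excludes it being an equivalence. As $\mathbb{C}_{\text{all}}$ is not an exact structure by Theorem~\ref{PROP}, the inclusion $\mathbb{E}'\subseteq\mathbb{C}_{\text{all}}$ is proper, so we may fix a kernel-cokernel pair $\xi\colon X\xrightarrow{f}Y\xrightarrow{g}Z$ with $\xi\in\mathbb{C}_{\text{all}}\setminus\mathbb{E}'$ and view it as the complex $C_\xi$ with $X,Y,Z$ in degrees $-1,0,1$. Since $\xi$ is a conflation for $\mathbb{C}_{\text{all}}$, the complex $C_\xi$ is acyclic in $\mathbf{D}^{\text{b}}(\mathcal{A},\mathbb{C}_{\text{all}})$, so $F(C_\xi)\cong 0$. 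On the other hand $\mathcal{A}$ is pre-abelian, hence additive with kernels, hence idempotent complete, so the bounded $\mathbb{E}'$-acyclic complexes form a thick subcategory of $\mathbf{K}^{\text{b}}(\mathcal{A})$; therefore $C_\xi\cong 0$ in $\mathbf{D}^{\text{b}}(\mathcal{A},\mathbb{E}')$ would force $C_\xi$ to be $\mathbb{E}'$-acyclic, and a three-term complex whose two non-zero differentials form a kernel-cokernel pair is $\mathbb{E}'$-acyclic if and only if that pair lies in $\mathbb{E}'$ --- which is false for $\xi$. Thus $C_\xi\not\cong 0$ in $\mathbf{D}^{\text{b}}(\mathcal{A},\mathbb{E}')$ while $F(C_\xi)\cong 0$, so $F$ is not conservative. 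As this applies to every exact structure on $\mathcal{A}$, and in particular to the maximal one $\mathbb{E}_{\text{max}}$, part~(ii) follows.

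The step that genuinely needs care is the appeal to \cite{HRKW} in part~(i): one must verify that ``inflation-exact with admissible cokernels'' together with ``right quasi-abelian'', as delivered by Theorem~\ref{PROP}, is exactly the hypothesis under which \cite{HRKW} construct the natural t-structure on $\mathbf{D}^{\text{b}}(\mathcal{A},\mathbb{C}_{\text{all}})$ and prove the realization functor to be a triangle equivalence. The remaining points are routine bookkeeping: a conflation gives an acyclic complex and hence a zero object in the derived category; the bounded $\mathbb{E}'$-acyclic complexes are thick because $\mathcal{A}$ is (weakly) idempotent complete; and three-term acyclic complexes coincide with conflations. If preferred, part~(ii) can instead be phrased through first extension groups --- $\Hom_{\mathbf{D}^{\text{b}}(\mathcal{A},\mathbb{E}')}(Z,X[1])=\operatorname{Ext}^1_{\mathbb{E}'}(Z,X)$ classifies $\mathbb{E}'$-conflations and fails to surject onto the corresponding group computed with $\mathbb{C}_{\text{all}}$ --- but the conservativity argument above is shorter.
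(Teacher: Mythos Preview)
Your argument is correct. For part~(i) you do essentially what the paper does: feed the output of Theorem~\ref{PROP} into the machinery of \cite{HRKW}. The only point the paper makes explicit and you leave implicit is that the relevant results in \cite{HRKW} are stated in the dual setting, so one has to \emph{dualize} them before applying them here; your closing caveat (``one must verify that \dots is exactly the hypothesis'') covers this, but it is worth saying outright.

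For part~(ii) you take a genuinely different route. The paper simply invokes (the dual of) \cite[Thm.~9.6]{HRKW}, whereas you give a self-contained argument: pick a kernel--cokernel pair $\xi\in\mathbb{C}_{\text{all}}\setminus\mathbb{E}'$, observe that the associated three-term complex is $\mathbb{C}_{\text{all}}$-acyclic but not $\mathbb{E}'$-acyclic, and use that bounded $\mathbb{E}'$-acyclic complexes form a thick subcategory of $\mathbf{K}^{\text{b}}(\mathcal{A})$ (since $\mathcal{A}$ is pre-abelian, hence weakly idempotent complete) to conclude that the comparison functor is not conservative. This is more elementary and more transparent than the paper's bare citation; it also makes clear that the only input specific to $\mathcal{A}$ is the strict inclusion $\mathbb{E}'\subsetneq\mathbb{C}_{\text{all}}$, which in turn is exactly the ``$\mathbb{C}_{\text{all}}$ is not an exact structure'' clause of Theorem~\ref{PROP}. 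The paper's approach, by contrast, buys brevity and keeps the homological bookkeeping inside \cite{HRKW}.
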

\begin{proof} The statements can be proved by copying and dualizing \cite[Thm.~9.3]{HRKW} and \cite[Thm.~9.6]{HRKW}.
\end{proof}

\begin{rmk} From the dual of \cite[Thm.~8.8]{HRKW} it follows that the heart $\mathcal{RH}(\mathcal{A})$ in Corollary \ref{COR-8} may be replaced by the localization $(\operatorname{hMon}\mathcal{A})[\{\text{pulations}\}^{-1}]$. The latter might come in handy, as it can be defined without using t-structures. We refer to \cite{Wegner17} for more details and note that it can be checked as in \cite[Section 3]{Wegner17} that $\PLS$, $\PLSw$ and $\PLN$ are what we there called Waelbroeck categories.

\end{rmk}

\smallskip

\small 

{\sc Acknowledgements.} The authors would like to thank the anonymous referee for her/his careful work and for suggesting to add the comments in Remark \ref{REM}(i)--(ii).


\normalsize

\bibliographystyle{amsplain}
\bibliography{Ref}

\providecommand{\bysame}{\leavevmode\hbox to3em{\hrulefill}\thinspace}
\providecommand{\MR}{\relax\ifhmode\unskip\space\fi MR }
\providecommand{\MRhref}[2]{%
  \href{http://www.ams.org/mathscinet-getitem?mr=#1}{#2}
}
\providecommand{\href}[2]{#2}
\begin{thebibliography}{10}

\bibitem{Klaus}
K.~D. Bierstedt, \emph{{An introduction to locally convex inductive limits}},
  {Geometrical methods of the theory of Fr\'echet spaces}, 1988, pp.~35--133.

\bibitem{BHT21}
T.~Br\"ustle, S.~Hassoun, and A.~Tattar, \emph{Intersections, sums, and the
  {J}ordan-{H}\"older property for exact categories}, J. Pure Appl. Algebra
  \textbf{225} (2021), no.~11, 106724.

\bibitem{BuanMarsh}
A.~B. Buan and R.~J. Marsh, \emph{From triangulated categories to module
  categories via localization {II}: calculus of fractions}, J. Lond. Math. Soc.
  (2) \textbf{86} (2012), no.~1, 152--170.

\bibitem{BE}
T.~B{\"u}hler, \emph{Exact categories}, Expo. Math. \textbf{28} (2010), no.~1,
  1--69.

\bibitem{Septimui}
S.~{Crivei}, \emph{{Maximal exact structures on additive categories
  revisited}}, {Math. Nachr.} \textbf{285} (2012), no.~4, 440--446.

\bibitem{DS16}
B.~Dierolf and D.~Sieg, \emph{A homological approach to the splitting theory of
  {$\rm PLS_w$} spaces}, J. Math. Anal. Appl. \textbf{433} (2016), no.~2,
  1305--1328.

\bibitem{DV2000}
P.~{Doma\'nski} and D.~{Vogt}, \emph{{A splitting theory for the space of
  distributions}}, {Stud. Math.} \textbf{140} (2000), no.~1, 57--77.

\bibitem{Floret}
K.~Floret, \emph{Lokalkonvexe {S}equenzen mit kompakten {A}bbildungen}, J.
  Reine Angew. Math. \textbf{247} (1971), 155--195.

\bibitem{FW}
K.~Floret and J.~Wloka, \emph{Einf\"uhrung in die {T}heorie der lokalkonvexen
  {R}\"aume}, Springer-Verlag, Berlin, 1968.

\bibitem{FS}
L.~Frerick and D.~Sieg, \emph{Exact {C}ategories in {F}unctional {A}nalysis},
  https:/\!/www.math.uni-trier.de/abteilung/analysis/ HomAlg.pdf, 2010.

\bibitem{HSW}
S.~Hassoun, A.~Shah, and S.-A. Wegner, \emph{Examples and non-examples of
  integral categories and the admissible intersection property}, Cahiers
  Topologie G{\'{e}}om. Diff{\'{e}}rentielle Cat{\'{e}}g. \textbf{62} (2021),
  no.~3, 329--354.

\bibitem{HRKW}
R.~Henrard, S.~Kvamme, A-C. van Roosmalen, and S.-A. Wegner, \emph{The left
  heart and exact hull of an additive regular category},  (2021),
  arXiv:2105.11483.

\bibitem{MarianneBSc}
M.~Lawson, \emph{Integral categories in functional analysis}, BSc Thesis,
  Teesside University, Middlesbrough, UK, 2021.

\bibitem{MV}
R.~Meise and D.~Vogt, \emph{Introduction to {F}unctional {A}nalysis}, Oxford
  Graduate Texts in Mathematics, vol.~2, The Clarendon Press, Oxford University
  Press, New York, 1997.

\bibitem{Nakaoka}
H.~Nakaoka, \emph{General heart construction for twin torsion pairs on
  triangulated categories}, J. Algebra \textbf{374} (2013), 195--215.

\bibitem{Rump01}
W.~Rump, \emph{Almost abelian categories}, Cahiers Topologie G{\'{e}}om.
  Diff{\'{e}}rentielle Cat{\'{e}}g. \textbf{42} (2001), no.~3, 163--225.

\bibitem{Silva}
J.~{Sebasti$\tilde{\text{a}}$o e Silva}, \emph{Su certi classi di spazi
  localmente convessi importanti per le applicazioni}, Rend. Mat. e delle sue
  Appl. \textbf{14} (1955), 388--414.

\bibitem{SiegThesis}
D.~Sieg, \emph{A homological approach to the splitting theory of {PLS}-spaces},
  PhD Thesis, Universit\"at Trier, Germany, 2010.

\bibitem{SW}
D.~Sieg and S.-A. Wegner, \emph{Maximal exact structures on additive
  categories}, Math. Nachr. \textbf{284} (2011), no.~16, 2093--2100.

\bibitem{Wegner17}
S.-A. Wegner, \emph{The heart of the {B}anach spaces}, J. Pure Appl. Algebra
  \textbf{221} (2017), no.~11, 2880--2909.

\bibitem{JochensBuch}
J.~Wengenroth, \emph{Derived {F}unctors in {F}unctional {A}nalysis},
  Springer-Verlag, Berlin, 2003.

\end{thebibliography}

\end{document}